\newcommand{\edit}[1]{#1}
\newcommand{\R}{{\mathbb{R}}}
\newcommand{\C}{{\mathbb{C}}}
\newcommand{\N}{{\mathbb{N}}}
\newcommand{\Z}{{\mathbb Z}}
\DeclareMathOperator{\sech}{sech}
\newtheoremstyle{ieeeconf}
  {0pt}   
  {0pt}   
  {\normalfont}  
  {\parindent}       
  {\itshape} 
  {:}         
  { } 
  {\thmname{#1} \thmnumber{#2}\thmnote{ (#3)}} 
\renewenvironment{proof}[1][\proofname]{\par
  \pushQED{\qed}%
  \normalfont \topsep\z@
  \trivlist
  \item[\hskip2em
        \itshape
    #1\@addpunct{:}]\ignorespaces
}{%
  \popQED\endtrivlist\@endpefalse
}
\theoremstyle{ieeeconf}
\newtheorem{theorem}{Theorem}
\newtheorem{lemma}[theorem]{Lemma}
\newtheorem{assumption}{Assumption}
\newtheorem{definition}{Definition}
\title{\LARGE \bf
Spatially-invariant opinion dynamics on the circle 
} 
\author{Giovanna Amorim$^{1}$, Anastasia Bizyaeva$^{2}$, Alessio Franci$^{3\dagger}$, and Naomi Ehrich Leonard$^{1\dagger}$
\thanks{This work was supported in part by ONR grant N00014-19-1-2556.}
\thanks{$^{1}$ Mechanical and Aerospace Engineering, Princeton University, Princeton, NJ 08544 USA, \{{\tt\small giamorim, naomi}\}{\tt\small @princeton.edu}}%
\thanks{$^{2}$  Sibley School of Mechanical and Aerospace Engineering, Cornell University, Ithaca, NY 14850 USA, {\tt \small anastasiab@cornell.edu}}
\thanks{$^{3}$ Electrical Engineering and Computer Science, University of Liege, and  WEL Research Institute., Wavre, Belgium, {\tt\small afranci@uliege.be}}%
\thanks{$^{\dagger}$These authors contributed equally to this work.}
}
\begin{document}

\maketitle
\thispagestyle{empty}
\pagestyle{empty}
\begin{abstract}
We propose \edit{and analyze} a nonlinear opinion dynamics model for an agent making decisions about a continuous distribution of options in the presence of input. Inspired by perceptual decision-making, we develop \edit{new} theory for \edit{opinion formation in response to inputs about} options distributed on the circle. \edit{Options on the circle can} represent, e.g., the possible \edit{directions of perceived objects and resulting} heading directions in planar robotic navigation problems. Interactions among options are encoded through a spatially invariant kernel, \edit{which we design to ensure that 
only a 
small (finite)
subset of options can be favored over the continuum. 
We leverage the spatial invariance of the model linearization to design flexible, distributed opinion-forming behaviors using spatiotemporal frequency domain and bifurcation analysis. }
We illustrate our model's versatility with \edit{an application to} robotic navigation
in crowded spaces. 
\end{abstract}

\section{Introduction}\label{sec:intro}

In perceptual decision-making, animals use sensory information, such as visual and auditory \edit{stimuli}, to respond to their environment. 
\edit{
Spatial invariance, the ability to respond to stimuli based solely on relative positions rather than absolute spatial coordinates, is believed to be a key feature of these sensory processes~\cite{duhamel1997spatial}.
Inspired by these insights, 
neural field models of perceptual decision-making
leverage spatial invariance~\cite{amari_dynamics_1977,ferreira2016multibump,ermentrout_spatiotemporal_2014, bressloff2012spatiotemporal, bressloff2002geometric}.}
They describe the spatiotemporal dynamics of neural activity using integro-differential equations with a
convolution kernel that captures 
interactions between different regions of the neural field.


\edit{These models are widely used for embodied intelligence, where sensory input, actions, and cognitive processes are interconnected.
 In robotics, an agent can use a distributed representation of its visual field and the objects within it to drive decisions. For example, neural field models have been used for robotic navigation in unknown environments with obstacles~\cite{oubbati2006,dahm1998robot},  manipulation~\cite{dahm1998robot}, target acquisition~\cite{schoner1995},  sensorimotor control of robots through coupled fields~\cite{erlhagen2006}, and modeling of cognitive intentions~\cite{tekulve2024intagent}. 
 Neural fields are also used in neuromorphic devices, which emulate biological processing in extremely low power hardware~\cite{indiveri2019neuromoprhiag}.

While these applications highlight the versatility of neural fields for embodied intelligence, they mostly rely on empirical approaches.} There are analytical approaches that characterize the behavior of neural field models~\cite{bressloff2012spatiotemporal, amari_dynamics_1977, bressloff2002geometric, ermentrout_spatiotemporal_2014, ferreira2016multibump}, but their input-output behavior for arbitrary inputs is not yet fully characterized.
Response to input is considered in \cite{amari_dynamics_1977,ferreira2016multibump,ermentrout_spatiotemporal_2014} but only for specific classes of inputs. \edit{Our work here lays a theoretical foundation for analysis, design, and control in more general scenarios. The novelty of our contribution lies in our study of the input-output behavior of our proposed nonlinear neural field model.}
We use a spatiotemporal transfer function to predict the model's response from its linearization.



\edit{We propose a neural field model to generalize nonlinear opinion dynamics (NOD)~\cite{LeoBizFra_AnnualReview2024} from a finite set to a continuum of options.} 
NOD has been used for robotic perceptual decision-making in obstacle avoidance and task allocation scenarios~\cite{cathcart2023proactive,amorim2024threshold}.
\edit{The distributed NOD model does not require}
prior knowledge of the number of objects in an agent's visual field and captures object volume and distance in its continuous representation.



Our contributions are as follows. First, we propose a \edit{new} nonlinear opinion dynamics model for an agent making decisions about a continuous distribution of options \edit{on the circle} \edit{\textit{and}} in the presence of input.
Second, we prove the \edit{system-theoretic} spatial invariance of the model linearization. 
Third, we use spatial-invariance of the linearized dynamics to prove the existence of an opinion forming bifurcation for the model with zero input. 
Fourth, we use space and time frequency domain analysis of the model linearization \edit{and define a spatiotempotal transfer function} to infer the input-output behavior of the nonlinear dynamics to arbitrary inputs.
Fifth, we propose a framework for designing kernels for an application of our model to robotic 
navigation.

Mathematical background is in Section \ref{sec:prelims}. We present the model in Section \ref{sec:model}. We prove the spatial invariance of the model linearization in Section \ref{sec:sys_analysis}. 
In Section \ref{sec:bif_theory} we prove an opinion-forming bifurcation in the model with zero input. In Section \ref{sec:input_response}, we discuss the model's input-output behavior. We propose a kernel design approach and illustrate our approach in Section \ref{sec:application}. A discussion is provided in Section \ref{sec:discussion}.





\section{Mathematical Preliminaries}
\label{sec:prelims}

We denote the set of integer values as $\Z$,
the set of non-negative integer values as $\N$, 
the set of real numbers as $\R$, and the set of complex numbers as $\C$. The unit circle is denoted by $\mathbb{S}^1$, i.e., $\mathbb{S}^1 = \R/\Z$. 
For $a,b\in \R$, the notation $a \nearrow b$ indicates the limit $a\to b$ 
with $a<b$.
For a complex number $s = \sigma + i \omega$, the real and imaginary parts are denoted as $\Re(s)$ and $\Im(s)$, respectively. We represent the complex conjugate as $\bar{s} = \sigma - i\omega$, the modulus as $|s| = \sqrt{s\bar{s}}$ and the argument as $\arg(s) = \lim_{n\to \infty}n\Im(\sqrt[n]{s/|s|})$ for $n\in\N - \{0\}$. 

The Hilbert space of square-integrable real functions on $\mathbb{S}^1$ is denoted by $L_2(\mathbb{S}^1)$.
\edit{The inner product} of
$v,w\in L_2(\mathbb{S}^1)$ is $\langle v, w \rangle = \int_{\mathbb{S}^1}v(\theta)w(\theta)d\theta$. 
The induced norm is $||v|| = \langle v, v\rangle^{1/2}$. 
We denote operators with capital 
letters. 
Let $A:L_2(\mathbb{S}^1) \to L_2(\mathbb{S}^1)$ be a linear operator.
We let the set 
$\text{Sp}(A) = \{\lambda_k\}$ denote the point spectrum of $A$, if it is not empty. Each eigenvalue $\lambda_{k} \in \text{Sp}(A)$ satisfies $Av_k(\theta) = \lambda_kv_k(\theta)$, where $v_k\in L_2(\mathbb{S}^1)$ denotes the eigenfunction corresponding to $\lambda_{k}$. 
We denote $\lambda_{\max} = \arg\!\max\{\Re(\lambda_k)\}$ as the \textit{leading eigenvalue} of $A$, and its corresponding eigenfunction, $v_{\max}\in L_2(\mathbb{S}^1)$, as the \textit{leading mode}.

\begin{definition}[Differential operator] Let $F:L_2(\mathbb{S}^1) \to L_2(\mathbb{S}^1)$ be a nonlinear operator. The differential of $F$ in the direction of $z$ at a point $z^*$, 
is \edit{$A_F = D_{z}F(z^*):=\lim_{\epsilon \to 0} \frac{1}{\epsilon}\big(F(\epsilon z + z^*) - F(z^*)\big) ,$}
 provided that the limit exists.
\end{definition}

\begin{definition}[Multiplication Operator] 
 A \emph{multiplication operator} $M$ is defined by $[Mh](x) := M(x)h(x)$, where $h$ is in the domain of $M$. Multiplication operators are the infinite-dimensional equivalent of diagonal matrices.
 \label{def:mult_op}
\end{definition}


 \begin{definition}[Spatial shift operator \cite{bamieh2002}, \cite{arbelaiz2024optimalestimationspatiallydistributed}] The spatial shift operator denoted by $T_\psi:L^2(\mathbb{S}^1) \to L^2(\mathbb{S}^1)$ is defined as $h(\theta) \mapsto [T_\psi h](\theta):=h(\theta-\psi)$
    for $\psi \in \mathbb{S}^1$ and $h \in  L^2(\mathbb{S}^1)$.
    \end{definition}
    
\begin{definition}[Spatially invariant operator \cite{bamieh2002}, \cite{arbelaiz2024optimalestimationspatiallydistributed}]

An operator $F$ is spatially invariant if $T_\psi F = FT_\psi$. 
\label{def:spatial_invt_op}
\end{definition}
We mainly work with a special class of spatially invariant linear operators, namely, \textit{spatial convolution operators} 
\begin{equation} \textstyle
            [Az](\theta):= \int_{\mathbb{S}^1}W(\theta - \phi)z(\phi)d\phi,
            \label{eq:conv_op}
\end{equation}
where the convolution kernel $W:\mathbb{S}^1\to\R$. 

\begin{definition}[Spatially Invariant Linear System \cite{bamieh2002}, \cite{arbelaiz2024optimalestimationspatiallydistributed}]
Consider a spatiotemporal input-output linear system. 
Let $u(\cdot,t),z(\cdot,t) \in L_2(\mathbb{S}^1)$ be the scalar-valued input and output functions at time $t \in \mathbb{R}_{\geq 0}$, respectively. Let $\theta \in \mathbb{S}^1$ be the spatial coordinate. 
A linear system of the form
    \begin{equation} \textstyle
        \frac{\partial z}{\partial t}(\theta,t) = [Az](\theta,t) + [Bu](\theta,t),
        \label{eq:lin_sys}
    \end{equation}
    is spatially invariant if the linear operators $A$, $B$ are spatially invariant. 
    \label{def:spatial_invt_sys}
\end{definition}

\begin{definition}[Spatial Fourier transform \cite{bamieh2002}, \cite{arbelaiz2024optimalestimationspatiallydistributed}] Let $f,g:\mathbb{S}^1\times\R_{\geq0}$ be spatiotemporal fields with spatial and time coordinates $\theta \in \mathbb{S}^1$ and $t \in \R_{\geq0}$.  Suppose $f(\cdot,t),g(\cdot,t)\in L^2(\mathbb{S}^1)$ for all $t \in \R_{\geq0}$. The \textit{spatial Fourier transform} maps $f(\theta,t)$ into its spatial Fourier coefficients 
\begin{equation}\textstyle
    \hat{f}(k,t) := \int_{\mathbb{S}^1} f(\theta,t) e^{-i 2\pi k\theta}d\theta,
    \label{eq:fourier_coeffs_form}
\end{equation}
where $k\in \Z$ is the spatial frequency. 

 The spatial Fourier transform is a coordinate transformation that expresses $f(\theta,t)$ in terms of the spatial Fourier modes  $\eta_k(\theta) = e^{i2\pi k\theta}$, i.e., the Fourier basis on $\mathbb{S}^1$, and the Fourier coefficients $\hat{f}(k, t)$. The \textit{inverse spatial Fourier transform} can be used to recover $f(\theta,t)$ from its Fourier coefficients $\hat{f}(k,t)$ :
\begin{equation}  \textstyle
    f(\theta, t) = \sum_{k \in \mathbb{Z}} \hat{f}(k, t) e^{i 2\pi k \theta}.  
\end{equation}
Parseval's Identity \cite{parseval_eq} ensures that $\langle \hat{f},\hat{g}\rangle = \langle f, g \rangle$.
The spatial Fourier transform operator is denoted 
by $\mathcal{F}(\cdot)$, and
the inverse spatial Fourier transform operator by 
$\mathcal{F}^{-1}(\cdot)$. 

\end{definition}  

The spatial Fourier transform \eqref{eq:fourier_coeffs_form} \textit{diagonalizes} convolution operators~\cite{bamieh2002}, i.e., if $A$ is a convolution operator~\eqref{eq:conv_op}, then 
$\widehat{[A h]}(k) := \hat{W}(k)\hat{h}(k)$, where $\hat{W}$ is the Fourier transform of the kernel of $A$. Thus, $A$ is mapped by $\mathcal F$ into a multiplication operator over the spatial frequency $k\in \Z$. 
For linear systems of the form \eqref{eq:lin_sys}, if $A$ and $B$ \edit{are convolution operators, i.e. are of the
form (1),} then  
\begin{equation}
    \frac{\partial \hat{z}(k,t)}{\partial t} = \hat{W}_A(k) \hat{z}(k,t) + \hat{W}_B(k)\hat{u}(k,t),
    \label{eq:diagonalization_gen_form}
\end{equation}
where $\hat{W}_A$ and $\hat{W}_B$ are the Fourier transforms of the kernels of $A$ and $B$, respectively. Following~\cite{bamieh2002}, we refer to \eqref{eq:diagonalization_gen_form} as the diagonalization of \eqref{eq:lin_sys}. 

 \begin{definition}[Temporal Laplace Transform] Let $z:\mathbb{S}^1\times\R_{\geq0}$ be a spatiotemporal field with spatial and time coordinates $\theta \in \mathbb{S}^1$ and $t \in \R_{\geq0}$, respectively. Then, the \textit{temporal Laplace transform} maps $z(\theta,t)$ into
 \begin{equation}\textstyle
     [\mathcal{L}z](\theta, s) = \int_0^\infty z(\theta,t)e^{-st}dt,
 \end{equation}
where $s\in \C$, whenever the integral exists. 

 \end{definition}

\section{Opinion Dynamics on the Circle}
\label{sec:model}

We propose a nonlinear opinion dynamics model for an agent making decisions about a continuous distribution of options on the circle. 
For every option $\theta \in \mathbb{S}^1$, $z(\theta, t) \in \R$ is the opinion of the agent for option $\theta$ at time $t$, where the more positive (negative) $z(\theta,t)$ is the more the agent favors (disfavors) option $\theta$. When $z( \theta,t) = 0$, the agent is neutral about option $\theta$. 
%
\edit{Inspired by biological sensory processes \cite{duhamel1997spatial,sridhar2021geometry},} the relationship between each option is encoded by the Lipschitz continuous kernel
$W: \mathbb{S}^1  \to \R$ based solely on their relative positions.
\edit{This design choice is consistent  with other neural field models \cite{bressloff2012spatiotemporal, amari_dynamics_1977, bressloff2002geometric, ermentrout_spatiotemporal_2014, ferreira2016multibump, oubbati2006,dahm1998robot,schoner1995,erlhagen2006,tekulve2024intagent,indiveri2019neuromoprhiag}
and provides analytical tractability.} 
A positive (negative) value of 
$W(\theta - \phi)$ corresponds to excitatory (inhibitory) interactions 
between the 
options $\theta$ and $\phi$. 
The opinion $z(\theta,t)$ evolves according to
\begin{eqnarray}
   \nonumber  \textstyle \tau \frac{{\partial}z}{\partial t}\!(\theta,t)\! \hspace{-0.35em} & \hspace{-0.35em} =  \hspace{-0.35em} & \hspace{-0.35em}
      \textstyle \!  - z(\theta,t) \!+\! \alpha \!\!\int\limits_{\mathbb{S}^1} \! W\!(\theta \!- \!\phi) \hspace{-0.05em} S \hspace{-0.1em}(z(\phi,t)) d\phi  \! + \!u(\theta,t) \\
  \hspace{-0.35em} & \hspace{-0.35em} =  \hspace{-0.35em} & \hspace{-0.35em} \hspace{-0.1em} [Gz](\theta,t) +  u(\theta,t) ,
  \label{eq:CO_DA}
\end{eqnarray}
where $u(\theta,t)\in  \R$ is the input, $\tau \in \R_{>0}$ is the characteristic timescale, and 
$\alpha \in \R_{>0}$ is the attention 
to option interactions, i.e., $\alpha$ models the agent commitment to forming strong opinions. The nonlinear nature of~\eqref{eq:CO_DA} comes from $S: \R \to \R$, a saturating function with $S(0) = 0$, $S^{'}(0) = 1$. 

\section{Spectral Analysis of Linearization}
\label{sec:sys_analysis}

We study the spectrum of the linearization of \eqref{eq:CO_DA} at the neutral equilibrium $z(\theta,t) = 0$, $\forall \theta\in \mathbb{S}^1$. 
\edit{
We prioritize local behavior because it captures key changes in the stability and quantity of equilibria. While global analysis is theoretically valuable, it is often infeasible due to the complexity of the system. Although we do not estimate the region of validity of the local analysis we present in this paper, methods for bounding the region of validity for similar analyses exist, e.g.~\cite{gupta2024estimates}. Conclusions from linearization typically hold within a sufficiently large parametrized neighborhood of the neutral equilibrium at the onset of instability.
The implementations in this paper, which focus on parameter regimes near this critical point, demonstrate the practicality and validity of this local approach.}

We first prove 
spatial invariance,
which enables the linearized system to be \textit{diagonalized}.
Using the diagonalization, we compute the eigenvalues and eigenfunctions of the linearized system and prove their relationship with the Fourier coefficients of the kernel and the spatial Fourier modes.

\begin{lemma}[Spatial invariance of the model linearization]
 Define the nonlinear operator in \eqref{eq:CO_DA} as \edit{$[Fz](\theta,t) = \int_{\mathbb{S}^1}W(\theta - \phi)S(z(\phi,t))$.}
The differential of $F$ in the direction $z$ 
at $z(\theta,t) = 0$ is
\begin{equation}\textstyle
   [A_Fz](\theta,t)\! \hspace{-0.05em}  = \hspace{-0.05em}  \![D_zF(0)](\theta,t) \!=\!  \int_{\mathbb{S}^1} \hspace{-0.05em} W(\theta \!-\! \phi)z(\phi,t)d\phi.
   \label{eq:int_linear}
\end{equation}
The linearization of \eqref{eq:CO_DA} at the neutral equilibrium $z(\theta,t) = 0$,
\begin{equation}
\begin{split}
\textstyle \frac{{\partial}z}{\partial t}(\theta,t)\! 
& = \! \textstyle \frac{1}{\tau}\Big(\! -\!z(\theta,t)+\! \alpha [A_Fz](\theta,t) \! +\! u(\theta,t)\! \Big)\!\\&\textstyle =\! \frac{1}{\tau}([A_Gz](\theta,t) + u(\theta,t)),
\end{split}\label{eq:linearization}    
\end{equation}
is a spatially invariant system in the sense of Definition \ref{def:spatial_invt_sys}.

\label{lemma:sp_inv_sys}
\end{lemma}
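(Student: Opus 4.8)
The plan is to prove the claim in two stages matching the two assertions in the statement: first derive the closed form \eqref{eq:int_linear} for the differential $A_F$, and then verify that the operators multiplying $z$ and $u$ in \eqref{eq:linearization} commute with every spatial shift $T_\psi$, so that \eqref{eq:linearization} qualifies as spatially invariant under Definition \ref{def:spatial_invt_sys}.

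For the differential, I would evaluate the limit in the definition of $D_z F$ directly. Because $S(0)=0$, the base point satisfies $[F0](\theta,t)=\int_{\mathbb{S}^1} W(\theta-\phi)S(0)\,d\phi = 0$, so the difference quotient collapses to $\frac{1}{\epsilon}[F(\epsilon z)](\theta,t)=\int_{\mathbb{S}^1} W(\theta-\phi)\,\frac{S(\epsilon z(\phi,t))}{\epsilon}\,d\phi$. Since $S$ is differentiable at the origin with $S'(0)=1$, the integrand converges pointwise as $\epsilon\to 0$ to $W(\theta-\phi)z(\phi,t)$, which gives \eqref{eq:int_linear}. The one genuinely technical point is justifying the exchange of limit and integral: I expect to dominate the integrand for small $\epsilon$ using that $W$ is Lipschitz on the compact $\mathbb{S}^1$ (hence bounded) and that the difference quotient $S(\epsilon z)/\epsilon$ stays bounded near the origin, then invoke dominated convergence. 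This is the step I regard as the main obstacle, since everything else is algebraic.

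With \eqref{eq:int_linear} in hand, the system operator in \eqref{eq:linearization} is $A_G=-I+\alpha A_F$ (acting together with the scalar factor $1/\tau$) and the input operator is $B=(1/\tau)I$. It suffices to show each summand commutes with $T_\psi$. The identity $I$ trivially satisfies $T_\psi I = I T_\psi$, and scalar multiples and sums of shift-commuting operators again commute with $T_\psi$, so the whole claim reduces to spatial invariance of the convolution operator $A_F$ in \eqref{eq:conv_op}. For that I would compute both compositions and match them by a change of variables: $[T_\psi A_F z](\theta)=\int_{\mathbb{S}^1} W(\theta-\psi-\phi)z(\phi)\,d\phi$, while $[A_F T_\psi z](\theta)=\int_{\mathbb{S}^1} W(\theta-\phi)z(\phi-\psi)\,d\phi$, and the substitution $\phi\mapsto\phi-\psi$, using that integration over $\mathbb{S}^1$ is invariant under shifts, turns the second expression into the first. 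Hence $T_\psi A_F=A_F T_\psi$, so both $A_G$ and $B$ are spatially invariant and \eqref{eq:linearization} is a spatially invariant system by Definition \ref{def:spatial_invt_sys}.
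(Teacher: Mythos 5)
Your proof is correct and follows the same overall route as the paper: identify the differential $A_F$ as the spatial convolution operator \eqref{eq:conv_op} with kernel $W$, then deduce spatial invariance of \eqref{eq:linearization} from the convolution structure. The differences lie in the justifications, and yours are the more careful ones. For the differential, the paper expands $S$ in a formal Taylor series about $0$ and cancels $\epsilon$, which tacitly assumes $S$ is smooth (indeed given by a power series) at the origin and passes the $\mathcal{O}(\epsilon^2)$ remainder through the integral without comment; you instead use only $S(0)=0$, differentiability of $S$ at $0$, and dominated convergence, which needs weaker hypotheses and explicitly addresses the limit--integral interchange the paper leaves implicit. One caveat on your domination step: saying the difference quotient ``stays bounded near the origin'' is not quite enough, because an $L_2(\mathbb{S}^1)$ function $z$ can be unbounded, so $\epsilon z(\phi)$ is not uniformly small; the clean fix is to note that a saturating $S$ with bounded derivative satisfies $|S(x)|\le L|x|$ for all $x$, whence $|S(\epsilon z(\phi))|/\epsilon \le L|z(\phi)|$, which is integrable on the compact $\mathbb{S}^1$, and dominated convergence applies (strictly, one should also upgrade pointwise convergence of the integrals to convergence in the $L_2$ sense, a point the paper also ignores). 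For the second half, the paper simply cites the known fact that convolution operators are spatially invariant and invokes linearity, whereas you verify the commutation $T_\psi A_F = A_F T_\psi$ by an explicit change of variables and, unlike the paper, also check the input operator $B=(1/\tau)I$, which Definition \ref{def:spatial_invt_sys} requires alongside $A_G$. Both arguments are sound; yours buys rigor under weaker regularity of $S$ and a complete verification of the definition, while the paper's buys brevity.
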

\begin{proof} 
Consider the expansion  $S(\epsilon z) = \sum_{n=0}^{\infty}\frac{1}{n!}(\epsilon z)^n S^{(n)}(0) $. Then, we can express
$D_zF(0) = \lim_{\epsilon \to 0}  \frac{\epsilon \int_{\mathbb{S}^1}\! \!W(\theta - \phi)S'(0) z(\phi)d\phi+ \mathcal{O}(\epsilon^2)}{\epsilon}$, 
where $\mathcal{O}(\epsilon^2)$ denotes higher order terms in $\epsilon$.
As $\epsilon \to 0$, the higher order terms vanish and we are left with \eqref{eq:int_linear}. 
Note that $A_F$ is a spatial convolution operator, which is spatially invariant.
Then, by linearity so is $A_G$. Thus, by Definition~\ref{def:spatial_invt_sys}, \eqref{eq:linearization} is a spatially invariant system.
\end{proof}

As a consequence of Lemma \ref{lemma:sp_inv_sys}, we can diagonalize the model linearization \eqref{eq:linearization}. Since \eqref{eq:int_linear} is a convolution operator, we use \eqref{eq:diagonalization_gen_form} to get
\begin{equation} \textstyle
     \frac{{\partial}\hat{z}}{\partial t}(k,t) = \frac{1}{\tau}\big( -1 + \alpha \hat{W}(k) \big) \hat{z}(k,t) + \frac{1}{\tau}\hat{u}(k,t).
     \label{eq:CO_DA_fourier}
\end{equation}

\begin{lemma}[Eigenvalues and eigenfunctions of the linearized system]
The eigenvalues $\lambda_k \in \text{Sp}(A_G)$ of the linearized system \eqref{eq:linearization} can be computed as
\begin{equation}\textstyle
    \lambda_k = \frac{1}{\tau}(-1 + \alpha \hat{W}(k)).
    \label{eq:evals}
\end{equation}
for $k\in \Z$. The corresponding eigenfunctions are the spatial Fourier modes $\eta_k(\theta) = e^{i2\pi k \theta}$.
\label{lemma:evals}
\end{lemma}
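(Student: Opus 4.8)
The plan is to verify directly that each spatial Fourier mode $\eta_k$ is an eigenfunction of $A_G$ and to read off the associated eigenvalue, then appeal to completeness of the Fourier basis to conclude that these eigenpairs exhaust the point spectrum. The computation rests entirely on the fact, already recorded around \eqref{eq:diagonalization_gen_form}, that the spatial Fourier transform diagonalizes convolution operators, so the bulk of the work is bookkeeping rather than analysis.

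First I would apply the convolution operator $A_F$ of \eqref{eq:int_linear} to $\eta_k(\theta)=e^{i2\pi k\theta}$. Writing $[A_F\eta_k](\theta)=\int_{\mathbb{S}^1}W(\theta-\phi)e^{i2\pi k\phi}d\phi$ and substituting $\psi=\theta-\phi$ (using $\mathbb{S}^1$-periodicity so the domain of integration is unchanged), the factor $e^{i2\pi k\theta}$ pulls out of the integral and the remainder is exactly the Fourier coefficient $\hat{W}(k)$ defined in \eqref{eq:fourier_coeffs_form}. Hence $A_F\eta_k=\hat{W}(k)\eta_k$, i.e., $\eta_k$ is an eigenfunction of $A_F$ with eigenvalue $\hat{W}(k)$; this is just the operator-level restatement of the diagonalization \eqref{eq:CO_DA_fourier}. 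Since $A_Gz=\tfrac{1}{\tau}(-z+\alpha A_Fz)$ by \eqref{eq:linearization}, linearity then gives $A_G\eta_k=\tfrac{1}{\tau}(-1+\alpha\hat{W}(k))\eta_k=\lambda_k\eta_k$, which is precisely \eqref{eq:evals} with eigenfunction $\eta_k$. Equivalently, one may simply drop the input term in \eqref{eq:CO_DA_fourier} and read $\lambda_k$ off as the scalar multiplying $\hat{z}(k,t)$.

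The only step requiring real care is the claim that nothing else lies in the point spectrum. Here I would use that $\{\eta_k\}_{k\in\Z}$ is a complete orthonormal basis of $L_2(\mathbb{S}^1)$ on which $A_G$ acts as multiplication by $\lambda_k$: if $A_Gv=\mu v$ with $v=\sum_k c_k\eta_k$, then $c_k(\lambda_k-\mu)=0$ for every $k$, so $\mu$ must coincide with some $\lambda_k$ and $v$ is supported on exactly those modes. This shows $\text{Sp}(A_G)=\{\lambda_k\}_{k\in\Z}$ with no spurious eigenvalues, which is the substantive content beyond the one-line verification.

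I would also flag, without it affecting the validity of \eqref{eq:evals}, that distinct indices can produce coincident eigenvalues: for a real even kernel one has $\hat{W}(-k)=\hat{W}(k)$, so $\lambda_{-k}=\lambda_k$ and the corresponding eigenspace is two-dimensional, spanned by $\eta_k$ and $\eta_{-k}$. Thus the statement should be read as giving the eigenvalues and a complete set of eigenfunctions, with the understanding that the correspondence $k\mapsto\lambda_k$ need not be injective. This degeneracy is the main conceptual subtlety to note, since everything else reduces to the diagonalization already in hand.
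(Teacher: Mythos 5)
Your proposal is correct and follows essentially the same route as the paper: the paper's proof simply reads the eigenvalues off the diagonalization \eqref{eq:CO_DA_fourier} and notes that the Fourier modes are the diagonalizing basis, which is exactly the content of your explicit change-of-variables computation $A_F\eta_k=\hat{W}(k)\eta_k$. Your two additions --- the completeness argument showing that $\{\lambda_k\}_{k\in\Z}$ exhausts the point spectrum (valid since $A_G$ is bounded, the kernel being Lipschitz, so it passes through the Fourier series), and the remark that $\lambda_{-k}=\lambda_k$ for symmetric kernels gives two-dimensional eigenspaces --- go beyond what the paper proves but are correct and in fact foreshadow the double eigenvalue crossing used in Lemma \ref{lemma:bif_point}.
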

\begin{proof} 
The form of the eigenvalues follows directly from the diagonalization \eqref{eq:CO_DA_fourier} of the linearized dynamics \eqref{eq:linearization}. The eigenfunctions are the spatial Fourier modes because they form the basis of the Fourier transformation that is used to diagonalize the system.
\end{proof}

Lemma~\ref{lemma:evals} reaffirms that, because of spatial invariance, the spatial Fourier modes are the eigenfunctions of the model linearization for any kernel design, provided it is spatially-invariant. Since the Fourier coefficients of the kernel determine the eigenvalues associated with each mode, they dictate which modes dominate. More precisely, if all modes are stable, i.e., $\Re(\lambda_k)<0$ for all $k\in\Z$, spatiotemporal inputs $u(\theta,t)$ will be predominantly amplified along the Fourier modes with largest $\Re(\lambda_k)$ as detailed in Section~\ref{sec:input_response}.

When the leading modes become unstable, that is, when the real part of their eigenvalues change from negative to positive through, e.g., an increase of the attention parameter $\alpha$, the nonlinear model~\eqref{eq:CO_DA} undergoes a bifurcation that enables robust opinion formation even in the absence of inputs. 
The leading 
 Fourier modes determine the number of maxima of the stable steady-state opinion patterns emerging at the bifurcation, as detailed in the next section.


\section{
Opinion-Forming Bifurcations} 
\label{sec:bif_theory}

We revisit the results presented in 
\cite{ermentrout_spatiotemporal_2014} for \eqref{eq:CO_DA} with zero input. 
We prove that~\eqref{eq:CO_DA} undergoes a bifurcation and compute the bifurcation point.
A local bifurcation occurs when the number and/or stability of the equilibrium solutions 
changes due to one or more eigenvalues of the model linearization crossing the imaginary axis as a parameter is varied. The state and parameter value at which this occurs is the bifurcation point.
We study how opinion patterns emerge 
and the role of kernel $W$ and show a bistability that enables rapid formation of strong opinions.
We make the following assumption to ensure the eigenvalues of~\eqref{eq:linearization} are real.
\begin{assumption}[Symmetric kernels]
    The kernel $W$ in~\eqref{eq:CO_DA}) is symmetric, i.e. $W(\psi) = W(-\psi)$. In particular, its Fourier coefficients $\hat{W}$ are real and $\hat{W}(k) = \hat{W}(-k)$.
    \label{as:symmetry}
\end{assumption}

\begin{lemma}[Existence of a bifurcation point at the neutral equilibrium] Consider \eqref{eq:CO_DA}. 
Let Assumption \ref{as:symmetry} hold.  \edit{ Suppose $\arg\max_{k\in\Z}\hat{W}(k) = \{-k_{\max}, k_{\max}\}$, }
 \edit{ $k_{\max}\in \N$}, denote the spatial frequency corresponding to the largest $\hat{W}(k)$.
Then, system~\eqref{eq:CO_DA} undergoes a bifurcation at the neutral equilibrium $z(\theta, t) = 0$ and $\alpha^* = \frac{1}{\hat{W}(k_{\max})}$. In particular, for $0< \alpha < \alpha^*$ the neutral equilibrium is locally asymptotically stable and for $\alpha > \alpha^*$ the neutral equilibrium is unstable. The bifurcation branches emerging at bifurcation for $\alpha=\alpha^*$ are tangent to the subspace of $L^2(\mathbb S^1)$ generated by $\cos(2\pi k_{\max}\theta)$ and $\sin(2\pi k_{\max}\theta)$. That is, steady-state opinion patterns along the bifurcation branches have the form $z(\theta,t) = A\cos(2\pi k_{\max}\theta) + B \sin(2\pi k_{\max}\theta)$ for $A,B \in \R$. In particular,  
the number of maxima exhibited in the opinion patterns forming at bifurcation is fixed by $k_{\max}$.

\label{lemma:bif_point}
\end{lemma}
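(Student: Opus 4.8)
The plan is to split the statement into a linear-stability claim, settled directly by the eigenvalue formula of Lemma~\ref{lemma:evals}, and a nonlinear branch-construction claim, driven by the $O(2)$ symmetry of the zero-input dynamics together with a reduction to the critical eigenspace.

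First I would dispatch the stability assertions. By Lemma~\ref{lemma:evals} the linearization has eigenvalues $\lambda_k = \frac{1}{\tau}(-1 + \alpha\hat{W}(k))$, and Assumption~\ref{as:symmetry} makes them real with $\lambda_k = \lambda_{-k}$. Since $\hat W$ is maximized exactly at $k = \pm k_{\max}$, the leading eigenvalue is $\lambda_{\pm k_{\max}} = \frac{1}{\tau}(-1 + \alpha\hat{W}(k_{\max}))$, which vanishes precisely at $\alpha^{*} = 1/\hat{W}(k_{\max})$. For $0 < \alpha < \alpha^{*}$ all $\lambda_k < 0$ and the neutral equilibrium is locally asymptotically stable; for $\alpha > \alpha^{*}$ one has $\lambda_{\pm k_{\max}} > 0$ and it is unstable. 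I would also record two facts needed later: the transversality condition $\partial_\alpha \lambda_{k_{\max}}\big|_{\alpha^{*}} = \hat{W}(k_{\max})/\tau > 0$, so the critical eigenvalue crosses zero with nonzero speed; and that at $\alpha = \alpha^{*}$ the kernel of $A_G$ is exactly the two-dimensional space $V = \mathrm{span}\{\cos(2\pi k_{\max}\theta),\, \sin(2\pi k_{\max}\theta)\}$, using that the maximum of $\hat W$ is strict.

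Next I would make the symmetry explicit, since it is the engine of the branch construction. By Lemma~\ref{lemma:sp_inv_sys} the convolution structure makes the vector field commute with every spatial shift $T_\psi$, and the evenness of $W$ from Assumption~\ref{as:symmetry}, combined with the pointwise action of $S$, makes it commute with the reflection $\theta \mapsto -\theta$; together these generate an $O(2)$ action under which $V$ carries the standard mode-$k_{\max}$ representation. I would then reduce to $V$ by center-manifold (equivalently Lyapunov--Schmidt) reduction, parametrizing the center variable by the complex amplitude $c$ of $\eta_{k_{\max}}$. The key observation is that the quadratic part of the reduced field vanishes on $V$: products of mode-$k_{\max}$ Fourier modes carry only frequencies $0$ and $\pm 2k_{\max}$, which are orthogonal to $V$ whenever $k_{\max} \geq 1$, so $S''(0)$ contributes nothing to the projection onto the critical mode. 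Hence the leading nonlinearity is cubic and the reduced equation is the $O(2)$-normal form $\dot c = \mu c - b|c|^{2}c + \mathcal{O}(|c|^{5})$ with $\mu \propto (\alpha - \alpha^{*})$. By the transversality recorded above, $\mu$ changes sign at $\alpha^{*}$, and the Equivariant Branching Lemma then produces a branch of nontrivial steady states bifurcating at $\alpha^{*}$. Because the reduction places these solutions, to leading order, inside $V$, the branches are tangent to $V$ and the steady patterns take the asserted form $A\cos(2\pi k_{\max}\theta) + B\sin(2\pi k_{\max}\theta)$. Writing this as $C\cos(2\pi k_{\max}\theta - \varphi)$ shows each pattern has exactly $k_{\max}$ maxima on $\mathbb{S}^{1}$, as $2\pi k_{\max}\theta$ sweeps $k_{\max}$ full periods while $\theta$ runs over $[0,1)$.

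I expect the main obstacle to be the branch-construction step, specifically verifying nondegeneracy $b \neq 0$ so that the pitchfork is genuine rather than degenerate. The vanishing of the quadratic term is forced by the mode arithmetic above and is exactly what guarantees the pitchfork (rather than transcritical) character and the tangency to $V$; but the cubic coefficient $b$ collects $S'''(0)$ together with an $S''(0)^{2}$ contribution routed through the passive modes slaved at frequencies $0$ and $2k_{\max}$, and pinning down its sign (hence whether the branch is sub- or supercritical, and so whether the bistability claimed in the text appears) is the delicate computation. Making the center-manifold reduction rigorous in the infinite-dimensional setting $L_2(\mathbb{S}^{1})$ is a secondary technical point that I would handle by invoking the standard center-manifold theorem for the semilinear evolution equation~\eqref{eq:CO_DA}.
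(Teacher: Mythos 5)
Your proposal is correct, and its linear half coincides exactly with the paper's proof: both read off $\lambda_k = \frac{1}{\tau}(-1+\alpha\hat W(k))$ from Lemma~\ref{lemma:evals}, locate the first crossing at $\alpha^*=1/\hat W(k_{\max})$ with two eigenvalues crossing simultaneously, and conclude local asymptotic stability for $\alpha<\alpha^*$ and instability for $\alpha>\alpha^*$. Where you genuinely diverge is the nonlinear half. The paper essentially stops at the linear picture: it notes that at $\alpha=\alpha^*$ the modes $\eta_{\pm k_{\max}}$ span the center subspace while all other modes are stable, asserts on that basis that these modes "determine the dominating bifurcation direction and emerging pattern," and defers the actual construction and classification of the bifurcating branches to the perturbation analysis of \cite{ermentrout_spatiotemporal_2014}, discussed only after the proof. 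You instead build the branches yourself: you make the $O(2)$ equivariance explicit (shifts from Lemma~\ref{lemma:sp_inv_sys}, reflection from Assumption~\ref{as:symmetry}), perform a center-manifold/Lyapunov--Schmidt reduction onto $V=\mathrm{span}\{\cos(2\pi k_{\max}\theta),\sin(2\pi k_{\max}\theta)\}$, verify by mode arithmetic that quadratic interactions only populate frequencies $0$ and $\pm 2k_{\max}$ and hence drop out of the projection, and invoke the Equivariant Branching Lemma. This buys a self-contained existence-and-tangency argument that the paper itself does not supply. One correction to your own assessment: the nondegeneracy $b\neq 0$ that you flag as the main obstacle is not needed for any claim in this lemma --- the Equivariant Branching Lemma yields branch existence and tangency from absolute irreducibility of the $O(2)$ action on $V$ plus your transversality computation $\partial_\alpha\lambda_{k_{\max}}|_{\alpha^*}=\hat W(k_{\max})/\tau>0$ alone; the cubic coefficient (collecting $S'''(0)$ and the $S''(0)^2$ slaved-mode contributions) governs only criticality and branch stability, which the lemma does not assert and which the paper likewise relegates to the post-proof discussion of \cite{ermentrout_spatiotemporal_2014}.
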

\begin{proof} 
From Lemma \ref{lemma:evals}, $\lambda_k \in \text{Sp}(A_G)$ are given by \eqref{eq:evals}.
We solve for $\lambda_k = \frac{1}{\tau}(-1 + \alpha \hat{W}(k)) = 0$.
Then, the first eigenvalue crossing occurs at  $\alpha^* = \frac{1}{\hat{W}(k_{\max})} = \frac{1}{\hat{W}(-k_{\max})}$ with two eigenvalues crossing at $0$.
For $\alpha< \alpha^*$ we have $\lambda_k < 0$ $\forall k$ so the origin is stable. However, once $\alpha> \alpha^*$, there exist at least two positive eigenvalues so the origin will be unstable. For the linearization, at the bifurcation, $\lambda_k <0$, $\forall k \neq \pm k_{\max}$,  so the corresponding spatial Fourier modes belong to the stable manifold. The spatial Fourier modes $\eta_{ k_{\max}}(\theta)$ and  $\eta_{ -k_{\max}}(\theta)$ form a basis for the center manifold; hence, they determine the dominating bifurcation direction and emerging pattern of the model linearization. 
\end{proof}

The bifurcation of system \eqref{eq:CO_DA} with zero input is  studied in 
\cite{ermentrout_spatiotemporal_2014}. There a perturbation analysis is used to show that the spatial pattern that appears is determined by the leading modes. 
 There are infinitely many branches of non-zero equilibria which exhibit the same pattern with $k_{\max}$ maxima up to spatial translation.
 As discussed in 
 \cite{ermentrout_spatiotemporal_2014}[Section 4.2.1], the stability of the bifurcating branches can be computed as functions of $\hat{W}(k_{\max})$, $S''(0)^2$ and $S'''(0)$. Generally, when $S''(0) = 0$ all of the non-zero branches of equilibria that bifurcate from the origin are stable. 
 When $S''(0) \neq 0$, all non-zero branches bifurcating at the origin are unstable; however, due to higher order terms, stable branches of non-zero equilibria exist farther away from the origin. 
    Fig.~\ref{fig:bif_diag_edited} illustrates $||z||$ at the equilibria as a function of the bifurcation parameter $\alpha$  with 
    \begin{equation}\textstyle
    S(z) = \frac{\tanh(z - \xi) - \tanh(-\xi)}{\sech^2(\xi)},
    \label{eq:shifted_tanh}
\end{equation}
a shifted hyperbolic tangent with $\xi \geq 0$ shift. Note that $S''(0) = 0$ for $\xi = 0$ and $S''(0) \neq 0$ $\forall \xi \neq 0$.

We see that for $\xi \neq 0$, there are regions below the bifurcation point for which the neutral and a non-neutral solution are both stable. 
This \textit{bistable} region enables rapid formation of strong opinions in response to spatially distributed input, as discussed in Section~\ref{sec:input_response}.
The patterns of opinion formation depend on the kernel, which can be designed. Fig.~\ref{fig:ker_four_ss} shows how $k_{\max}$, the spatial frequency corresponding to the largest Fourier coefficient of the kernel, determines the number of maxima exhibited in the steady-state opinion pattern of the agent for \eqref{eq:CO_DA} with zero-input.
Spatial invariance ensures that for all initial conditions the solution converges to the same opinion pattern modulo a spatial translation (Fig.~\ref{fig:ker_four_ss}c).
\edit{

The eigenstructure of the linearization of spatially invariant systems with symmetric kernels (Assumption~\ref{as:symmetry}) is robust to small violations of both spatial invariance and kernel symmetry. Dominant eigenvalues, in particular, remain unique and real.
 }

 \begin{figure} [!t]
    \centering
    \vspace*{5pt}
    {\includegraphics[width=1\linewidth]{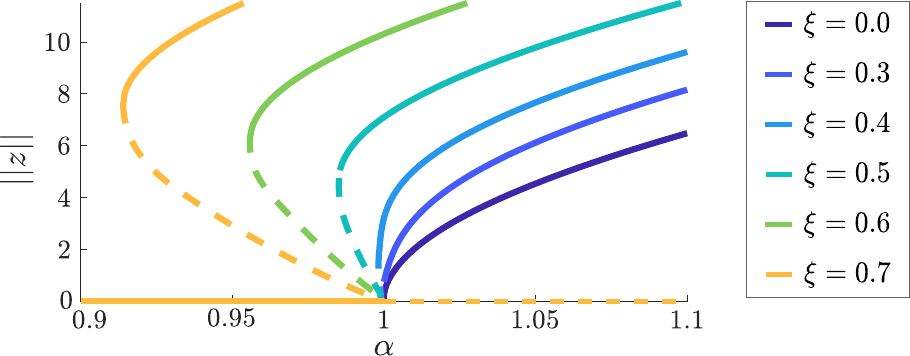}}
    \caption{Bifurcation diagrams illustrating the effect of the shift value $\xi$ on the dynamics \eqref{eq:CO_DA} with shifted sigmoid \eqref{eq:shifted_tanh}. Stable (unstable) branches of equilibria are shown as solid (dashed) lines. 
    }
    \label{fig:bif_diag_edited}
\end{figure}

\section{ 
 \edit{ Decision-Making  on the Circle with Tunable Sensitivity to Distributed Input}}
\label{sec:input_response}

We reintroduce distributed input to the model, and use its linearization, together with spatial and temporal frequency analysis, 
to infer the nonlinear input-output behavior. 

We make the following assumptions.

\begin{assumption}[Shifted sigmoid]
\label{as:sh_sigmoid}
  We assume  $S''(0)\neq 0$ to ensure that a bistable region exists (see Fig.~\ref{fig:bif_diag_edited} for $\xi \neq 0$).
  
\end{assumption}

\begin{assumption}[Input assumptions]
\label{ass:input}
Inputs $u(\theta,t)\in L^2(\mathbb S^1)$ for all $t\geq 0$. Furthermore, for all $\theta\in\mathbb S^1$, $u(\theta,\cdot):\R_{\geq 0}\to\R$ is slowly varying, that is, it is Lipschitz continuous with Lispschitz constant $0<l\ll \tau^{-1}$, for $\tau$ in~\eqref{eq:CO_DA}.
\end{assumption}

The condition $l\ll\tau^{-1}$ implies that inputs vary much more slowly than the characteristic time constant of model~\eqref{eq:CO_DA}. Hence, under Assumption~\ref{ass:input}, we can use the quasi-static input approximation and let $u(\theta,t)\equiv u_h(\theta)$.

For any diagonalized spatially distributed system of the form \eqref{eq:diagonalization_gen_form}, if $[\mathcal{L}\hat{u}](k,s)$, $[\mathcal{L}\hat{z}](k,s)$ exist, then the transfer function $H(k,s)$ characterizes the input-output response in terms of the Laplace transforms of $\hat{z}(k,t)$ and $\hat{u}(k,t)$, i.e., \edit{$[\mathcal{L}\hat{z}](k, s) = H(k,s)[\mathcal{L}\hat{u}](k,s).$}
 By the Final Value Theorem, if the input is constant in time and~\eqref{eq:diagonalization_gen_form} is stable, then $\lim_{t \to \infty} \hat{z}(k,t) \!=\! sH(k,0)\Big(\frac{\hat{u}_h(k)}{s}\Big) \! = \!H(k,0)\hat{u}_h(k)$. This leads us to the following definition.

 \begin{figure} [!t]
    \centering
    \vspace*{5pt}
    {\includegraphics[width=.95\linewidth]{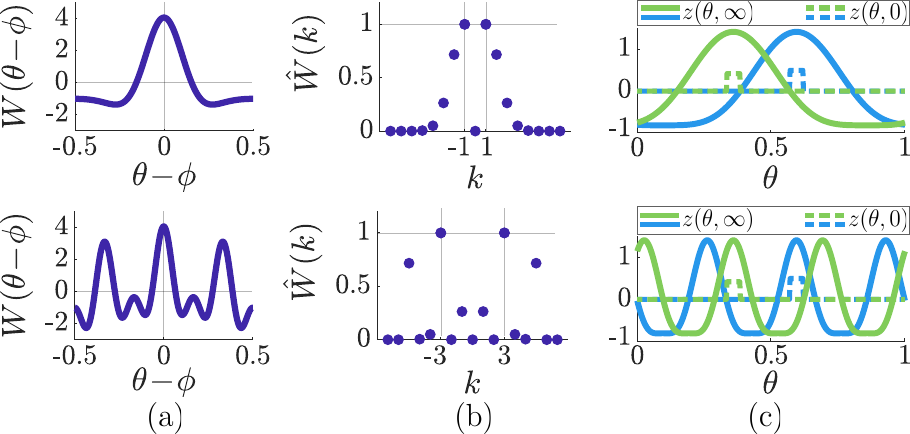}}
    \caption{ Influence of the kernel design on the steady-state opinion patterns of \eqref{eq:CO_DA} with zero-input. (a) Two kernel designs. (b) Fourier coefficients of the kernel. Top: $\pm k_{\max} =\pm1$. Bottom: $\pm k_{\max} = \pm3$. (c) Steady-state opinion pattern $z(\theta,\infty)$, of dynamics \eqref{eq:CO_DA} for initial conditions $z(\theta,0)$. The number of maxima of $z(\theta,\infty)$ equals $k_{\max}$ of the corresponding kernel. Parameters: $\tau = 1$, $\alpha = 0.98$, $p = 3$, $\xi = 0.7$.}
    \label{fig:ker_four_ss}
    
\end{figure}

\begin{definition}[Spatial transfer function] The \textit{spatial transfer function} of~\eqref{eq:diagonalization_gen_form} is
    \begin{equation} \textstyle
        \tilde{H}(k) = H(k, 0).
        \label{eq:sp_tf}
    \end{equation}
\end{definition}
Spatial transfer function $\tilde{H}(k)$ 
determines the steady-state output of~\eqref{eq:diagonalization_gen_form} in response to input that is constant in time.

\begin{theorem} [Spatial transfer function of \eqref{eq:linearization}] Let Assumptions \ref{as:symmetry}--
\ref{ass:input} hold.
\edit{Let $\arg\max_{k\in\Z}\hat{W}(k) = \{-k_{\max}, k_{\max}\}$, }
 \edit{ $k_{\max}\in \N$, denote the spatial frequency 
 of the largest $\hat{W}(k)$}.
Spatial transfer function  \eqref{eq:sp_tf} of the linearized model~\eqref{eq:linearization} is
    \begin{equation} \textstyle
        \tilde{H}(k) = \frac{\tau}{1 -  {\alpha} \hat{W}(k)},\quad k \in \Z.
        \label{eq:sys_sp_tf}
    \end{equation}
    In particular, for $ {\alpha} \nearrow \alpha^*$, $\tilde{H}(\pm k_{\max}) \to \infty$. 
\label{thm:sp_tf}
\end{theorem}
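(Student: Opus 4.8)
The plan is to exploit the diagonalization already in hand. By Lemma~\ref{lemma:sp_inv_sys} the linearization~\eqref{eq:linearization} is spatially invariant, so in the spatial Fourier basis it decouples into the family~\eqref{eq:CO_DA_fourier} of scalar, first-order, linear time-invariant ODEs indexed by $k\in\Z$. For each fixed $k$ this is an ordinary scalar input--output system with state $\hat z(k,\cdot)$, input $\hat u(k,\cdot)$, and a single pole equal to the eigenvalue $\lambda_k$ of Lemma~\ref{lemma:evals}. The spatial transfer function is then obtained by computing the temporal transfer function of this scalar system and evaluating it at $s=0$.

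Concretely, I would apply the temporal Laplace transform to~\eqref{eq:CO_DA_fourier} for each $k$. Using $[\mathcal{L}\,\partial_t\hat z](k,s)=s[\mathcal{L}\hat z](k,s)-\hat z(k,0)$ and taking zero initial data (the transfer function encodes the forced response), the differential relation~\eqref{eq:CO_DA_fourier} becomes an algebraic relation between $[\mathcal{L}\hat z]$ and $[\mathcal{L}\hat u]$. Solving for the ratio $H(k,s)=[\mathcal{L}\hat z](k,s)/[\mathcal{L}\hat u](k,s)$ yields a first-order rational function of $s$ whose single pole is exactly $\lambda_k=\tfrac1\tau(-1+\alpha\hat W(k))$ from~\eqref{eq:evals}. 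Setting $s=0$ as in the definition of the spatial transfer function~\eqref{eq:sp_tf} collapses this to the closed form~\eqref{eq:sys_sp_tf}, since the denominator reduces to $1-\alpha\hat W(k)$.

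For the blow-up claim I would substitute the bifurcation value $\alpha^*=1/\hat W(k_{\max})$ from Lemma~\ref{lemma:bif_point}. Because $\pm k_{\max}$ maximize $\hat W$ with $\hat W(k_{\max})=\hat W(-k_{\max})>0$, for every $\alpha<\alpha^*$ the denominator $1-\alpha\hat W(\pm k_{\max})$ is strictly positive and decreases monotonically to $0$ as $\alpha\nearrow\alpha^*$; hence $\tilde H(\pm k_{\max})\to+\infty$, while $\tilde H(k)$ stays finite for $k\neq\pm k_{\max}$ because those modes remain strictly stable at $\alpha^*$.

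The derivation itself is routine scalar LTI algebra, so the real work is justificatory rather than computational. The main obstacle is ensuring that the objects invoked exist in the regime of interest: the Laplace transforms $[\mathcal{L}\hat z]$, $[\mathcal{L}\hat u]$ and the passage to the $s=0$ (steady-state) value are legitimate only when each per-mode system is stable, i.e.\ $\Re(\lambda_k)<0$ for all $k$, which by~\eqref{eq:evals} holds precisely for $\alpha<\alpha^*$. This, together with the slowly-varying (quasi-static) input of Assumption~\ref{ass:input} and the Final Value Theorem already invoked, makes $\tilde H(k)=H(k,0)$ the correct steady-state gain. The divergence of $\tilde H(\pm k_{\max})$ at $\alpha^*$ is then the analytic signature of the loss of this stability, consistent with the opinion-forming bifurcation of Lemma~\ref{lemma:bif_point}.
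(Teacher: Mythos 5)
Your proposal is correct and follows essentially the same route as the paper's proof: use the diagonalization \eqref{eq:CO_DA_fourier} guaranteed by Lemma~\ref{lemma:sp_inv_sys}, identify the per-mode pole $\lambda_k$ from Lemma~\ref{lemma:evals}, form the scalar transfer function $H(k,s)=\frac{1}{s-\lambda_k}$, and evaluate at $s=0$ to obtain \eqref{eq:sys_sp_tf} and the blow-up as $\alpha\nearrow\alpha^*$; your added care about zero initial data, existence of the Laplace transforms for $\alpha<\alpha^*$, and the monotone vanishing of the denominator only tightens what the paper leaves implicit. The one shared wrinkle is the input gain $\frac{1}{\tau}$ multiplying $\hat{u}$ in \eqref{eq:CO_DA_fourier}: carried through literally, your ``routine LTI algebra'' yields $\tilde{H}(k)=\frac{1}{1-\alpha\hat{W}(k)}$ rather than $\frac{\tau}{1-\alpha\hat{W}(k)}$, a constant-factor discrepancy that the paper's own computation also glosses over and which is immaterial to the divergence conclusion.
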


\begin{proof} From Lemma \ref{lemma:evals} we know the eigenvalues of \eqref{eq:CO_DA_fourier}, the diagonalization of~\eqref{eq:linearization}. 
\edit{So, we compute} ${H}(k,s) = \frac{1}{s - \lambda_k}$.
Then $\tilde{H}(k)\!= \! 
     \frac{\tau}{1 -  {\alpha} \hat{W}(k)} .$
 As $ {\alpha} \nearrow \alpha^*$, $H(\pm k_{\max},0)\! \to \! \infty$.
\end{proof}

Theorem \ref{thm:sp_tf} implies that close to bifurcation, i.e., for $ {\alpha} \nearrow \alpha^*$, 
the input-output response of the linearized system is dominated by the leading spatial Fourier modes $\eta_{ \pm k_{\max}}(\theta)$. This means that the \textit{alignment} of $u(\theta)$ with $\eta_{\pm k_{\max}}(\theta)$ is the main determinant of the model response to inputs.

For the nonlinear system with $ {\alpha} \nearrow \alpha^*$, the input-output response takes place in the bistable region. If $\langle \eta_{\pm k_{\max}}(\theta), {u_h(\theta)} \rangle = \hat u_h(k_{\max}) \neq 0$, the \textit{input is aligned with the leading modes} $\eta_{ \pm k_{\max}}(\theta)$. These modes filter input nonlinearity and amplify the input because, by Theorem~\ref{thm:sp_tf}, the direction of these modes are ultrasensitive to input. The result is a steady-state opinion pattern with $k_{\max}$ maxima and  $||z(\theta)|| \gg ||u_h(\theta)||$, as illustrated in the top row of Fig.~\ref{fig:ker_i_o}. As shown in Fig.~\ref{fig:ker_i_o}a (top), the Fourier coefficients of the input for $\pm k_{\max} = \pm 1$ are nonzero meaning the input is aligned with $\eta_{ \pm k_{\max}}(\theta) = \eta_{ \pm1}(\theta)$. 
The input distribution in Fig.~\ref{fig:ker_i_o}b (top) is small in magnitude (less than 0.01), while the resulting steady-state opinion pattern in Fig.~\ref{fig:ker_i_o}c (top) has $k_{\max} = 1$ maximum that is greater than 1.5 in magnitude. 

If  $\langle \eta_{\pm k_{\max} (\theta)}, {u}_h(\theta) \rangle = 0$, \textit{the input is unaligned with the leading modes} $\eta_{ \pm k_{\max}}(\theta)$ and the steady-state opinion pattern will not exhibit large maxima. I.e., because the input does not have a component along the ultrasensitive direction, by Theorem~\ref{thm:sp_tf} it does not get amplified, as illustrated in the bottom row of Fig.~\ref{fig:ker_i_o}. 
As shown in Fig.~\ref{fig:ker_i_o}a (bottom), the Fourier coefficients of the input for $\pm k_{\max} = \pm 1$ are zero meaning the input is unaligned with $\eta_{ \pm k_{\max}}(\theta) = \eta_{ \pm1}(\theta)$. 
The input distribution in Fig.~\ref{fig:ker_i_o}b (bottom) is small in magnitude (less than 0.01) and the resulting steady-state opinion pattern in Fig.~\ref{fig:ker_i_o}c (bottom) has no large maximum.

Our results show how even very small distributed input can trigger the formation of a strong opinion and whether or not this happens depends on the design of kernel $W$. Thus, $W$ can be designed to tune response to be ultrasensitive to inputs that matter for function and robust to inputs that don't.

\begin{figure} 
    \centering
    \vspace*{5pt}
    {\includegraphics[width=.95\linewidth]{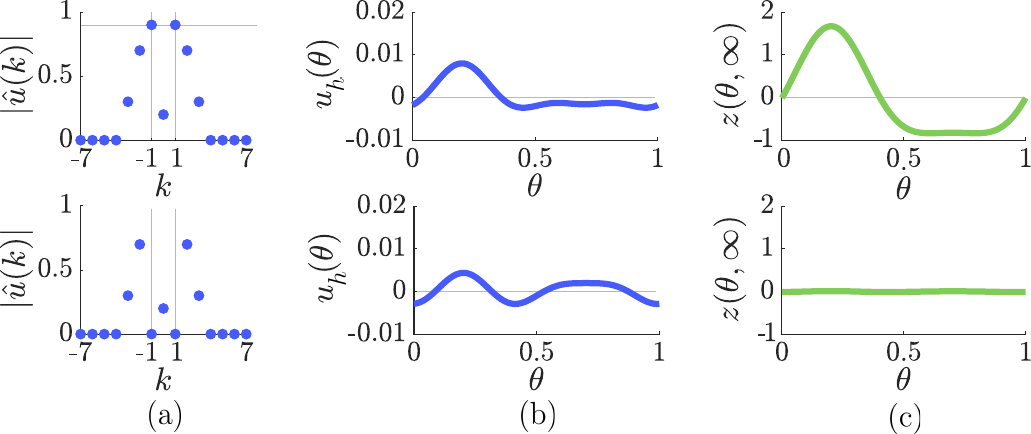}}
    \caption{Input-output behavior of the dynamics \eqref{eq:CO_DA} with input distributions aligned or unaligned with the Fourier mode corresponding to $\pm k_{\max} \!=\! \pm1$. Top row: Aligned. Bottom row: Unaligned. (a) Magnitude of the Fourier coefficients of the input. (b) Input distribution. (c) Steady-state opinion pattern $z(0,\infty)$. Parameters: $\tau \!=\! 1$, $\alpha \!=\! 0.98$, $p \!=\! 3$, $\xi \!=\! 0.7$. }
    \label{fig:ker_i_o}
\end{figure}

\section{Application to Robot Navigation}
\label{sec:application}

We illustrate with \edit{simulations} the benefits of our approach to perceptual decision-making with an application of the dynamics  \eqref{eq:CO_DA} to robot
navigation. We consider the case of a robot 
 moving in a crowded space, such as an airport, where it must pass through gaps of different sizes (e.g., between people in a line) that may change over time. We assume the robot has a (visual) sensor so that it can perceive these gaps.  

We specialize to a scenario where a robot finds itself trapped inside a circle of people and needs to choose and cross through a large enough gap between people.
Choosing a gap is challenging as people may be distributed unevenly around the circle, resulting in multiple gap options, only a select few of which may be suitable for the robot to cross. Also, the size of the gaps may change over time due to people moving for their own purposes or  in response to the robot, e.g., people may move to make space for the robot to cross. 

In Section \ref{sub:kernel_design} we present a framework for designing $W$ from its Fourier coefficients to allow the robot to select a single gap. 
We discuss four scenarios that demonstrate how our model can be used for fast-and-flexible decision-making in this robotic navigation problem. In Section \ref{sub:app_static_line} two scenarios demonstrate the robot's ability to choose a single gap, while in Section \ref{sub:app_dyn_line} the two other scenarios show how the robot can quickly adapt to changes in gap sizes.

We take $\mathbb{S}^1$ to represent the circular visual field for the robot. Then an option $\theta \in \mathbb{S}^1$ represents the angle associated to a point in the visual field.
 The input $u(\theta,t)$ is the visual observation (e.g., pixel) at angle $\theta$ at time $t$.  We let a point in the input distribution that reflects a gap be represented by $u(\theta,t)>0$, in blue in Fig.~\ref{fig:app_one_two} (bottom). We assume changes in gaps occur slowly enough that Assumption~\ref{ass:input} holds. The opinion $z(\theta,t)$ as shown in Fig.~\ref{fig:app_one_two} (top), captures the robot's preference over time for one gap, where the preference corresponds to the strongest opinion (in yellow).

\subsection{Fourier-Based Kernel Design}
\label{sub:kernel_design}

We leverage the results of Theorem~\ref{thm:sp_tf} to design \edit{a} kernel $W$
that imposes the desired 
opinion formation behavior in response to distributed input on $\mathbb{S}^1$.  Options (angles) that are close (far) to each other  should have an excitatory (inhibitory) interaction. And the opinion pattern should have a single maximum, so that the robot selects a single gap. 
From the results summarized in Section~\ref{sec:input_response}, we design $W$ such that \edit{$\hat{W}(k) = 0$ for $k = 0$, and $\hat{W}(k) = \hat{f}(k)$  $\forall k \neq 0$, where $\hat{f}(k):\Z \to \R$ is strictly decreasing, square-summable and symmetric.} The strictly decreasing property ensures that \edit{$\pm k_{\max} = \pm1$} while 
square-summability 
ensures that the inverse Fourier transform of $W$ exists and that $W(\theta - \phi) \in L_2(\mathbb{S}^1)$, by Parseval's identity \cite{parseval_eq}. Symmetry is required to satisfy Assumption \ref{as:symmetry}. 
For the following simulations, we take \edit{a Gaussian function} $\hat{f}(k) = e^{-(k - 1)^2/p^2}$, \edit{where $p$ adjusts its width.}
\begin{figure} 
    \centering
    \vspace*{5pt}
    {\includegraphics[width=1\linewidth]{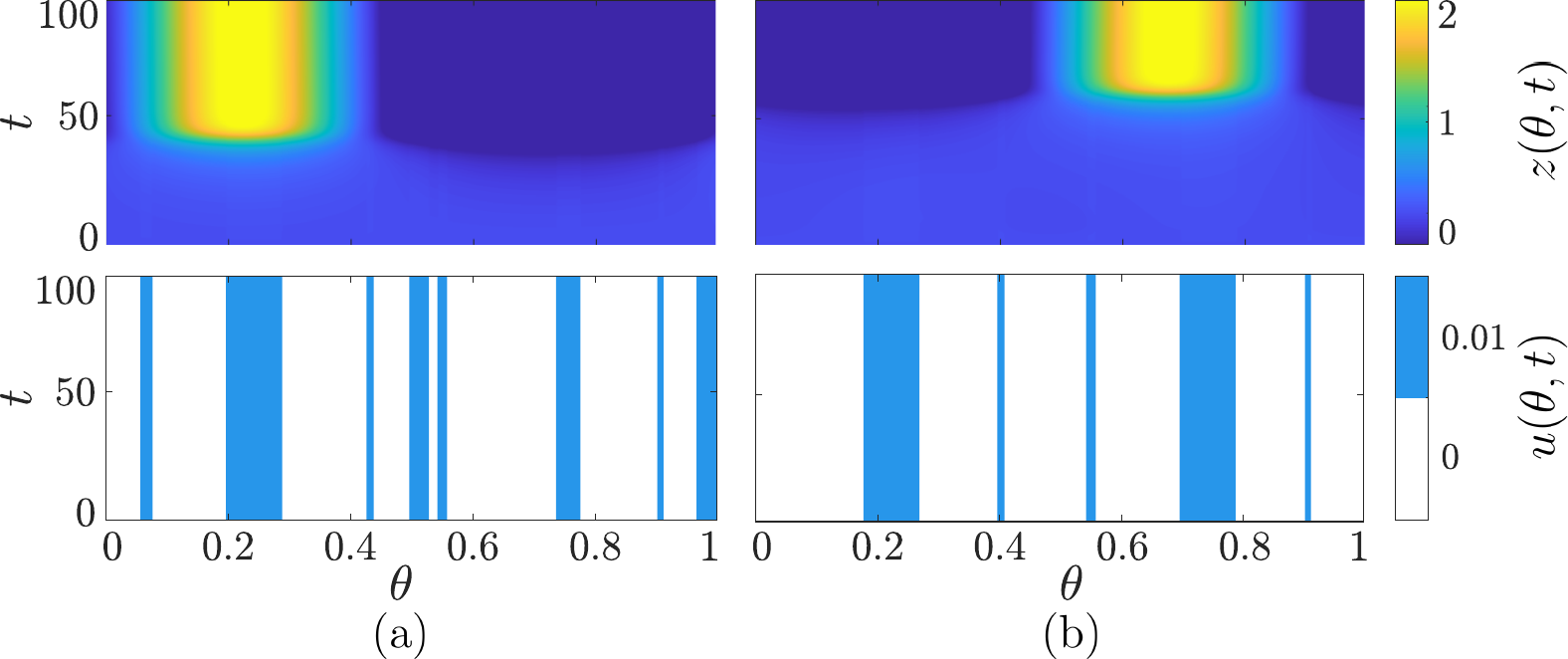}}
    \caption{Decision-making of a robot selecting a gap through which to cross  a circle of non-moving people. Bottom row: gap distribution over time where gaps are indicated by $u(\theta,t)>0$ in blue. Top row: opinion pattern over time (strongest opinion in yellow). (a) One widest gap. (b) Two wide gaps of same size.  \edit{Parameters: $\tau = 1$, $\alpha = 0.98$, $\xi = 0.7$, 
    $p = 3$.}}
    \label{fig:app_one_two}
\end{figure}
\subsection{Choosing the Best Gap and Avoiding Deadlock}
\label{sub:app_static_line}

We illustrate the model's ability to pick the best among multiple gaps and to rapidly avoid deadlock when faced with two equally suitable gaps. We assume  the people in the circle are not moving. 
In Fig.~\ref{fig:app_one_two}a (bottom) there are several gaps but one that is clearly wider than the others. \edit{The input at the location of the widest gap gets amplified so that the single maximum guaranteed by the kernel design discussed in Section \ref{sub:kernel_design} forms at that location. Hence, we see} in Fig.~\ref{fig:app_one_two}a (top), \edit{that} the robot forms a strong preference for the widest gap. 
In Fig.~\ref{fig:app_one_two}b (bottom), there are two equally wide gaps. \edit{Since the kernel design discussed in Section \ref{sub:kernel_design} ensures that only one maximum is formed, 
one of the inputs gets amplified and the others suppressed. }
In Fig.~\ref{fig:app_one_two}b (top), the robot forms strong opinions for one of the two widest gaps and avoids deadlock.

\subsection{Robustness and Responsiveness to Change}
\label{sub:app_dyn_line}

We illustrate the model's robustness to unimportant change and responsiveness to important change in input.  We assume the people in the circle are moving. In Fig.~\ref{fig:app_move_resist} (bottom), there is initially one very wide gap and one narrow gap. However, over time, the wide gap becomes narrower, and the narrower gap becomes wider. In Fig.~\ref{fig:app_move_resist}a, the decrease in size of the initially wide gap is small enough that the robot can still fit through it and thus it does not change its choice.
\edit{Such a change in gap size could result from humans making only small positioning adjustments in response to the robot, which would reflect as small perturbation to the input distribution.}
\edit{In this case, since a strong opinion first forms in favor of the gap that is initially widest and the gap remains sufficiently large, the robot does not change its mind. This illustrates the robustness of the decision-making to small changes in input.} 

In Fig.~\ref{fig:app_move_resist}b, the decrease in size of the initially wide gap is large enough that the robot changes its choice to the other emerging gap. 
\edit{Such a change in gap size could result from humans trying to make space for the robot to pass, which would reflect as large change to the input distribution.}
\edit{ In this case, 
the opinion pattern forms changes in favor of the emerging widest gap, i.e, the robot changes its mind about which gap it prefers. This illustrates the adaptability of the robot's decision-making to large changes in input. 
We plan to characterize the threshold that governs the switch between the behaviors shown in Fig.~\ref{fig:app_move_resist}a and Fig.~\ref{fig:app_move_resist}b in future work. }



\begin{figure} 
    \centering
    \vspace*{5pt}
    {\includegraphics[width=1\linewidth]{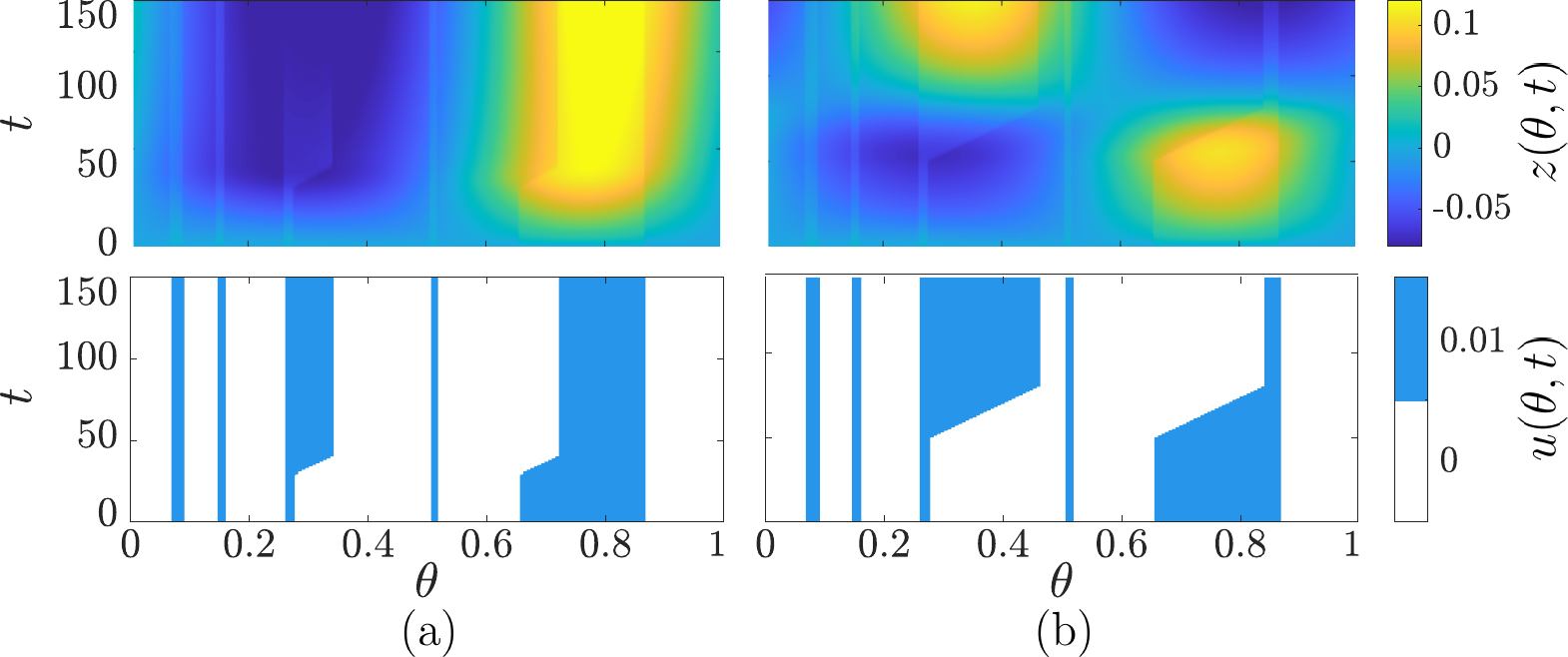}}
    \caption{Decision-making of a robot selecting a gap through which to cross  a circle of moving people.  Bottom row: gap distribution over time where gaps are indicated by $u(\theta,t)>0$ in blue. Top row: opinion about where to cross the line over time (strongest opinion in yellow). (a) Small decrease  over time in size of initially widest gap. (b) Large decrease  over time in size of initially widest gap. \edit{Parameters: $\tau \!=\! 1$, $\alpha \!=\! 0.96$, $\xi \!= \!0.6$, 
    $p \!= \!3$.}}
    \label{fig:app_move_resist}
\end{figure}
\section{Discussion}
\label{sec:discussion}

We presented a \edit{new} nonlinear opinion dynamics model for an agent making decisions about a continuous distribution of options in response to distributed input on the circle. We proved spatial invariance of the model linearization and a bifurcation of the model with zero input, which yields fast and flexible decision-making. \edit{A key contribution is our study of} the input-output behavior of the model and design of the kernel. We demonstrated \edit{important} advantages of the model in robot perceptual decision-making problem. In future work we aim to \edit{derive an estimate for the region of validity of the model linearization}
 \edit{and characterize the relationship between input distribution and the location where the maximum in the opinion distribution form. We will} implement this model for perceptual decision-making in robotics where the dynamics are in a closed loop with the physical dynamics of the agent. 

\section*{Acknowledgements}

We thank 
Dr. Juncal Arbelaiz (Princeton University) for insights and references on spatially-invariant systems.




\bibliographystyle{./bibliography/IEEEtran}
\bibliography{./bibliography/references}

\end{document}